\documentclass[11pt,reqno]{amsart}
\usepackage[utf8]{inputenc}
\usepackage{a4wide}
\usepackage{xparse} %% necessary for \norm macro !
\usepackage{amssymb,amsthm,amsmath,eucal,mathrsfs,bbm,xspace}
\usepackage{enumitem}
\usepackage{tikz}
\usepackage{tikz-cd}
\usetikzlibrary{decorations.pathmorphing}

\usepackage[colorlinks=true,    % links will be colored instead of boxes
            linkcolor=blue,     % internal links (sections, theorems, etc.)
            citecolor=blue,     % bibliography citations
            urlcolor=blue]      % external URLs
           {hyperref}

\def\R{\mathbb{R}}

\def\C{\mathbb{C}}

\def\cL{\mathcal{L}}

\def\arg{\mathrm{arg}\,}
\def\ran{\mathrm{ran}\,}

\def\bel{\begin{equation}}
\def\eel{\end{equation}}

\def\beq{\begin{eqnarray*}}
\def\eeq{\end{eqnarray*}}

\def\eps{\varepsilon}

\def\om{\omega}

\def\la{\lambda}

\def\al{\alpha}

\def\Ga{\Gamma}
\def\si{\sigma}

\def\del{\delta}

\def\calE{\mathcal E}

\makeatletter
\newtheoremstyle{tagged}% name
  {3pt}% Space above
  {3pt}% Space below
  {\itshape}% Body font
  {}% Indent amount
  {\bfseries}% Theorem head font
  {}% Punctuation after theorem head
  { }% Space after theorem head
  {\thmname{#1}~(#3)}% Theorem head spec: name + tag
\makeatother

\newtheorem{proposition}{Proposition}[section]
\newtheorem{lemma}[proposition]{Lemma}
\newtheorem{remark}[proposition]{Remark}
\newtheorem{corollary}[proposition]{Corollary}

\newtheorem{theorem}[proposition]{Theorem}

\newtheorem{example}[proposition]{Example}

\theoremstyle{tagged}

\renewcommand\labelenumi{$(\roman{enumi})$}
\renewcommand\theenumi\labelenumi
\renewcommand{\Re}{\text{\rm Re}}

\numberwithin{equation}{section}

\newcounter{aufzi}

\NewDocumentCommand{\norm}{m o}{%
  \lVert #1 \rVert%
  \IfValueT{#2}{_{#2}}%
}
\NewDocumentCommand{\bignorm}{m o}{%
  \bigl\lVert #1 \bigr\rVert%
  \IfValueT{#2}{_{#2}}%
}
\NewDocumentCommand{\Bignorm}{m o}{%
  \Bigl\lVert #1 \Bigr\rVert%
  \IfValueT{#2}{_{#2}}%
}
  \newcommand{\dsp}{\displaystyle}

  \newcommand {\pihalbe} {\frac{\pi}{2}}
  \newcommand {\eins}{\mathbbm 1}
  \newcommand {\sfrac}[2] { {\,{}^{#1}\!\!/\!{}_{#2}\,}} 
  
  \newcommand {\NN} {\mathbb N}
  \newcommand {\ZZ} {\mathbb Z}
  \newcommand {\RR} {\mathbb R}

  \newcommand {\THUT} {\widehat{T}}
  \newcommand {\UHUT} {\widehat{U}}
  \newcommand {\calO} {\mathscr O}
  \DeclareMathOperator {\strip} {St}
  \DeclareMathOperator {\Sect} {\ensuremath{\Sigma}}
  \DeclareMathOperator {\bdd} {{\mathcal L}}
  
  \newcommand{\MG}{G}  % = Madani-Gamma !!
  \newcommand{\HP}{\mathbb H}  % HalfPlane
  \newcommand{\KR}{\mathbb K}
  \newcommand{\UMD}{\text{\upshape UMD}\xspace}

  \title{$H^\infty$--functional calculus for generators of
    semigroups that admit lower bounds} \author{Bernhard H. Haak}
  \address{\it Université de Bordeaux\\Institut de Mathématiques de
    Bordeaux\\351 cours de la Libération\\F -- 33405 Talence\\France}
  \email{bernhard.haak@math.u-bordeaux.fr}

\author{Peer Chr. Kunstmann}
\address{\it Karlsruhe Institute of Technology (KIT),
Institute for Analysis\\
Englerstr. 2, D -- 76128 Karlsruhe\\Germany}
\email{peer.kunstmann@kit.edu}

\keywords{$H^\infty$-functional calculus, $C_0$-semigroups, dilations,
  Banach spaces (especially UMD spaces), operator theory}
\subjclass{Primary: 47A60, 47D06;
  Secondary: 46B20, 47A10, 47B40, 47A20}

\date{}

\begin{document}
\maketitle

\begin{abstract}
  We study $C_0$-semigroups on \UMD Banach spaces under the assumption
  that a single semigroup operator admits a lower bound.  We establish
  boundedness of $H^\infty$-functional calculi for the negative
  generator of such semigroups.  Our approach is based on a dilation
  argument: combining a recent construction due to Madani with
  transference results for groups on \UMD spaces, we embed the
  semigroup into a $C_0$-group on a larger space and transfer
  functional calculus estimates back to the original generator.  As a
  byproduct, we obtain quantitative exponential lower bounds for the
  semigroup.  We also show that equivalences due to Batty and Geyer,
  valid in Hilbert spaces, fail in the general Banach space setting.
\end{abstract}
  
\section{Introduction}
The bounded $H^\infty$-functional calculus for negative generators of
$C_0$-semigroups on Banach spaces has become a central tool in modern
analysis, with applications ranging from evolution equations to
harmonic analysis.  For a strongly continuous group
$(S(t)) = (e^{-itA})$ on a Hilbert space, Boyadzhiev and de~Laubenfels
\cite{B-dL} showed that $A$ has a bounded $H^\infty$-functional
calculus on a horizontal strip whose height is determined by the growth
bounds of the group. This result has been extended to bounded groups
on \UMD Banach spaces by Hieber and Prüss \cite{Hieber-Pruess} using
transference principles.  They showed that, for generators $A$ of
bounded $C_0$-groups, the operator $iA$ admits a bounded
$H^\infty$-calculus on a suitable double-sector symmetric about the
real axis. This was later extended by Haase \cite{Haase:transference}
to strongly continuous groups on \UMD spaces with arbitrary growth
bounds, at the expense of obtaining a bounded $H^\infty$ functional
calculus on ``Venturi regions'' which are unions of double sectors and
strips.

\medskip

A natural question is how to obtain such results for semigroups that
are not groups.  A classical approach consists in dilating a semigroup
into a group.  For instance, the Sz.-Nagy dilation theorem shows that
a contraction semigroup on a Hilbert space admits a unitary dilation,
i.e., it can be realized as the compression of a unitary group acting
on a larger Hilbert space.  Fröhlich and Weis \cite{F-W} constructed a
dilation of $C_0$-semigroups on Banach spaces with finite cotype by
means of square function estimates, thereby providing another route to
functional calculus results. Recently, Madani \cite{Madani} showed
that a single lower bound for a bounded operator on a reflexive Banach
space suffices to construct an invertible dilation on a reflexive
space.  The purpose of this paper is to combine Madani's dilation
technique with the findings of Haase in order to derive boundedness of
the $H^\infty$-functional calculus for negative generators of
$C_0$-semigroups from a simple lower estimate at a single time.  More
precisely, we show that if a semigroup $(T(t))$ in a \UMD space $X$
satisfies
\[
  \forall x\in X:\ \norm{ T(t_0)x } \ge c \, \norm{ x }
\]
for some $t_0, c > 0$, then its negative generator $A$ admits a
bounded $H^\infty$-calculus on regions that are unions of sectors and
half-planes.  The argument proceeds by constructing a dilation of the
semigroup to a $C_0$-group on a suitable \UMD space $Y$, using
Madani's approach. The functional calculus for the generator of the
dilation group in $Y$ that we have by Haase's result is then (partly)
transferred back to the negative generator $A$ of the initial
semigroup.

We also record a quantitative lower estimate for semigroups that
follows from the dilation construction and may be of independent
interest.

\subsection*{Notation and Main result}
We write $\bdd(X)$ for the space of bounded operators on a Banach
space $X$.  For a fixed operator $T \in \bdd(X)$ we write
$\{ T \}'$ for its commutant, i.e. the algebra of all $U \in \bdd(X)$
such that $UT = T U$.  For any nontrivial complex domain $\calO$ we
denote by $H^\infty(\calO)$ the space of bounded holomorphic functions on
$\calO$. For $\si\in(0,\pi)$ we denote by $\Sect_\si$ the open sector
\[
 \Sect_\si=\{z\in\C\setminus\{0\}: |\arg z|<\si\}.
\]
We call an operator $A$ in a Banach space $X$ sectorial, if $A$ is densely
defined with dense range and there exists $\om\in(0,\pi)$ such that the
spectrum $\si(A)$ of $A$ is contained in $\overline{\Sect_\om}$ and we have,
for any $\si\in(\om,\pi)$,
\[
\sup\{\norm{\la R(\la,A)}_{\bdd(X)} : \la\in\C\setminus\overline{\Sect_\si}
\,\}<\infty.
\]
In this case, we say that $A$ has a bounded $H^\infty(\Sect_\si)$-calculus
for a fixed $\si>\om$ if there exists $C>0$ such that we have
\[
\norm{ f(A) }_{\bdd(X)} \le C \, \norm{ f }_{H^\infty(\Sect_\si)}.
\]
for all $f\in H^\infty(\Sect_\si)$ with
\[
\sup_{z\in\Sect_\si}\frac{|z|^\eps}{1+|z|^{2\eps}}|f(z)|<\infty
\]
for some $\eps>0$, where the operator $f(A)$ for such $f$ is defined by
the absolutely convergent integral
\[
f(A)=\frac1{2\pi i}\int_{\partial\Sect_\eta} f(\la) R(\la,A)\,d\la
\]
with $\eta\in(\om,\si)$. Here, $\partial\Sect_\eta$ is parametrized such that
$\Sect_\om$ lies to the left. If $A$ has a bounded $H^\infty(\Sect_\si)$-calculus
in this sense then the map $f\mapsto f(A)$ has indeed a unique extension to an
algebra homomorphism $H^\infty(\Sect_\si)\to\bdd(X)$. For this and further
details we refer to \cite{Haase:buch}.

%We say that $A$ has a bounded $H^\infty(\calO)$ functional
%calculus, if there exist a bounded algebra homomorphism
%$\Psi: H^\infty(\calO) \to \bdd(X)$. In particular, writing $f(A)$ for
%$\Psi(f)$, there is some constant $C>0$ such that
%\[
%  \norm{ f(A) }_{\bdd(X)} \le C \, \norm{ f }_{H^\infty(\calO)}.
%\]
We recall that a Banach space $X$ has the \UMD\ property if the
Hilbert transform extends to a bounded operator on $L^p(\RR; X)$ for
some (equivalently, all) $1<p<\infty$.  We refer to \cite{fab4:band1}
for detailed discussion and references.

\begin{theorem}\label{thm:HP-transfer}
  Let $X$ be a \UMD space and let $(T(t))_{t\ge 0}$ be a
  $C_0$-semigroup on $X$ with generator $-A$ and with growth bound
  $\norm{T(t)} \le M \, e^{\omega t}$, $t\ge0$, where $M\ge1$ and $\omega\in\R$.
  Assume %that its generator ${-}A$ is injective and
  that the semigroup satisfies the lower bound
\begin{align}\label{eq:lower-bd-t0}
  \norm{ T(t_0)x }_X \ge  c \, \norm{x}, \quad(x\in X)
\end{align}
for some $t_0, c>0$.  Then, for any $\sigma>\pihalbe$ and any
$\theta > \omega$, $A+\theta$ admits a bounded
$ H^\infty( \Sect_\si) $ calculus.
\end{theorem}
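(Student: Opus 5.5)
Following the strategy announced in the introduction, I will construct a dilation of $(T(t))$ to a $C_0$-group on a larger \UMD space, apply Haase's transference result there, and compress the resulting estimates back to $X$.

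\textbf{Step 1: rescaling.} Replacing $T(t)$ by $e^{-\theta t}T(t)$ replaces $A$ by $A+\theta$. Since $\theta>\omega$, the rescaled semigroup is exponentially stable: $\norm{e^{-\theta t}T(t)}\le Me^{-\delta t}$ with $\delta=\theta-\omega>0$. The lower bound \eqref{eq:lower-bd-t0} survives with constant $ce^{-\theta t_0}$. So it suffices to treat an exponentially stable semigroup with a lower bound at $t_0$, and to show that its negative generator $A$ has a bounded $H^\infty(\Sect_\si)$-calculus for every $\si>\pihalbe$. Exponential stability gives $\si(A)\subset\{\Re z\ge\delta\}$ and, together with the Hille--Yosida estimates, sectoriality of angle $\pihalbe$ (in the paper's sense $0\in\rho(A)$; density of the range is automatic since $A$ is invertible).

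\textbf{Step 2: dilation.} The lower bound at $t_0$ propagates to all times. For $t\le t_0$ we have $c\norm{x}\le\norm{T(t_0)x}\le\norm{T(t_0-t)}\,\norm{T(t)x}$, and for larger $t$ the semigroup law gives the rest. Hence each $T(t)$ is an isomorphism onto its (closed) range, with $\norm{T(t)x}\ge c'e^{-\kappa t}\norm{x}$. This is the "quantitative lower bound" byproduct; Madani's construction may sharpen $\kappa$.

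Following Madani, I build a \UMD space $Y$ (for instance an $\ell^p$- or $L^p$-type direct sum/limit of copies of $X$, which stays \UMD), a $C_0$-group $(S(t))_{t\in\R}$ on $Y$, an isometric embedding $J:X\to Y$ and a bounded projection $P:Y\to X$ with
\[
T(t)=PS(t)J,\qquad t\ge0.
\]
Concretely, I would take the inductive-limit type space in which $T(t_0)$ becomes invertible, renormed so the lower bound makes the backward shifts bounded. The group then has growth bounds $\norm{S(t)}\le M'e^{-\delta' t}$ for $t\ge0$ and $\norm{S(-t)}\le M'e^{\kappa t}$ for $t\ge 0$.

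\textbf{Step 3: transference.} Haase's theorem applied to the generator $-B$ of $(S(t))$ on the \UMD space $Y$ gives a bounded $H^\infty$-calculus for $B$ on a Venturi region. Because the growth of $S$ is asymmetric, this region contains, after the shift by $\theta$, every sector $\Sect_\si$ with $\si>\pihalbe$ intersected with the relevant half-plane. For $f\in H^\infty(\Sect_\si)$ with decay at $0$ and $\infty$, the Laplace-type representation $R(\la,A)=-\int_0^\infty e^{\la t}T(t)\,dt$ (for $\Re\la<\delta$) gives $R(\la,A)=PR(\la,B)J$ on the relevant contour. Integrating,
\[
f(A)=Pf(B)J,\qquad \norm{f(A)}\le\norm{P}\,\norm{f(B)}\le C\norm{f}_{H^\infty(\Sect_\si)}.
\]

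\textbf{Main obstacle.} The hard part is Step 2 together with the matching of contours in Step 3. I need the dilation identity at the level of resolvents on the whole contour $\partial\Sect_\eta$. However, $\partial\Sect_\eta$ enters the half-plane $\Re\la>\delta$, where the Laplace representation fails and the dilation yields only a compression of $B$, not an intertwining. I would try to deform the contour or work with the functions $f(z-\theta')$ for some auxiliary $\theta'$, splitting $f$ into a part near $0$ and a part at infinity. I also need to verify that Haase's Venturi region covers $\Sect_\si$, for which the half-plane part of the region should suffice.
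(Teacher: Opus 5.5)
\emph{The gap is the bounded projection $P:Y\to X$ in Step~2.} Your final estimate $\norm{f(A)}\le\norm{P}\,\norm{f(B)}$ depends on it, and it is not available.

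The construction you point to is an inductive-limit or Madani-type space in which $T(t_0)$ becomes invertible. It yields a group \emph{extension}, $S(t)J=JT(t)$ for $t\ge0$, and for an extension such a $P$ cannot exist in general. If $PJ=I_X$, then $L:=PS(-t_0)J$ satisfies $LT(t_0)=PS(-t_0)S(t_0)J=PJ=I_X$, so $T(t_0)$ would have a bounded left inverse. Example~\ref{example:BG} gives an isometric, hence lower-bounded, shift semigroup on a \UMD space where no such left inverse exists, since one would make a non-complemented subspace complemented. You give no other construction of a compression dilation. Also, $J$ is not isometric: Madani's embedding only satisfies $\norm{\iota x}_Y\le\norm{x}_X\le\al\norm{\iota x}_Y$.

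So your remark that the dilation yields ``only a compression, not an intertwining'' is backwards: the intertwining is what you get, and the projection is what you cannot have. The obstacle you single out is not real either. For $\eta\in(\pihalbe,\pi)$ the contour $\partial\Sect_\eta$ lies in $\{\Re\la\le 0\}$, where after your rescaling $R(\la,A)=-\int_0^\infty e^{\la t}T(t)\,dt$ holds.

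\emph{The missing idea is that no projection is needed.} Laplace-transforming $S(t)\iota=\iota T(t)$ gives $R(\la,A_Y)\iota=\iota R(\la,A)$ on the half-plane where that representation holds. In the paper this reads $R(\la,A_Y)=\Phi(R(\la,A))$, since these resolvents commute with $T(t_0)$. Hence $f(A_Y)\iota=\iota f(A)$ for the elementary functions, and
\[
\norm{f(A)x}_X\le\al\norm{\iota f(A)x}_Y=\al\norm{f(A_Y)\iota x}_Y\le\al\,\norm{f(A_Y)}\,\norm{x}_X .
\]
The lower bound of the embedding replaces $\norm{P}$. As in the paper, you can phrase this as $\norm{U}\le\al\norm{\Phi(U)}$ on $\{T(t_0)\}'$ (Corollary~\ref{cor:Madani}).

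What then remains, and is left vague in your Step~3, has three parts. First, turn Haase's strip calculus for $A_Y$ into a half-plane calculus by pushing the right edge of the strip contour to $+\infty$. Second, extend it by the convergence lemma. Third, pass from $H^\infty_1$ on a half-plane to $H^\infty(\Sect_\si)$.

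The paper uses a symmetric strip, so it obtains the result for $A+\eta$ and removes the shift at the end using invertibility of $A$. Recentring your asymmetrically growing group, i.e.\ working with $e^{\mu t}S(t)$ for a suitable $\mu$, would instead give a half-plane calculus on some $\HP_r$ with $r>0$. Since $\HP_r$ lies inside every $\Sect_\si$ with $\si>\pihalbe$, this avoids that last step.
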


In the final part of this section, we consider a $C_0$-semigroup
$(T(t))_{t\ge0}$ in an arbitrary Banach space $X$ and assume that
$t_0,c>0$ are such that the lower bound \eqref{eq:lower-bd-t0} holds.
As a consequence of Madani's construction that we review in
Section~\ref{sec:madani}, we can then show the following.

\begin{proposition}\label{prop:sg-lower-bound}
  Let $X$ be a Banach space and $(T(t))_{t\ge0}$ a $C_0$-semigroup on
  $X$.  Assume $t_0,c>0$ are such that the lower bound
  \eqref{eq:lower-bd-t0} holds. Then we have the following:
  \begin{enumerate}[label=\alph*)]
  \item\label{item:sg-lower-a}
  For all $\alpha>1$ there exists $m>0$ such that
  \[
    \norm{ T(t) x } \ge  m\,e^{\nu t} \norm{x},\quad x\in X, t>0.
  \]
  where  $\nu = \tfrac1{t_0}\ln(\frac{c}{\alpha})$.
  \item\label{item:sg-lower-b}
  There exists a larger Banach space $Y\supseteq X$, containing $X$ as a
  closed subspace, and a $C_0$-group $(S(t))_{t\in\R}$ on $Y$ such that
  $T(t)=S(t)$ on $Y$ for all $t\ge0$.
  Moreover, one can achieve that $Y$ is reflexive if $X$
  is reflexive and that $Y$ is \UMD if $X$ is \UMD.
%  on $Y$$C_0$-semigroup $\big(\Phi(T(t))\big)_{t\ge0}$ in $Y$ obtained by
%  application of Theorem~\ref{thm:Madani} and Corollary~\ref{cor:Madani}
%  to $T:=T(t_0)$ extends to a $C_0$-group $(S(t))_{t\in\R}$ in $Y$.
   \end{enumerate}
\end{proposition}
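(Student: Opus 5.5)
The plan is to prove part \ref{item:sg-lower-a} by an elementary iteration argument that uses only \eqref{eq:lower-bd-t0}, and part \ref{item:sg-lower-b} by applying Madani's construction from Section~\ref{sec:madani} to the single operator $T:=T(t_0)$ and then extending the resulting semigroup to a group.

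\emph{Part \ref{item:sg-lower-a}.}
\begin{enumerate}
\item Iterating \eqref{eq:lower-bd-t0} gives $\norm{T(nt_0)x}\ge c^n\norm{x}$ for all $n\in\NN$.
\item Put $K:=\sup_{0\le r\le t_0}\norm{T(r)}<\infty$. For $s\in[0,t_0)$ we have
\[
c\norm{x}\le\norm{T(t_0)x}=\norm{T(t_0-s)T(s)x}\le K\norm{T(s)x},
\]
so $\norm{T(s)x}\ge \tfrac{c}{K}\norm{x}$.
\item For $t>0$ write $t=nt_0+s$ with $n\in\NN_0$ and $s\in[0,t_0)$. Steps (i) and (ii) give
\[
\norm{T(t)x}=\norm{T(s)T(nt_0)x}\ge \tfrac{c}{K}\,c^n\norm{x}.
\]
\item Since $n\le t/t_0<n+1$, we get $c^n\ge \min(1,c^{-1})\,e^{(t/t_0)\ln c}$.
\item Hence the estimate holds with exponent $\tfrac{1}{t_0}\ln c$, and a fortiori with $\nu=\tfrac1{t_0}\ln(c/\alpha)\le \tfrac1{t_0}\ln c$ for every $\alpha>1$, because $e^{\nu t}\le e^{(t/t_0)\ln c}$ for $t>0$.
\end{enumerate}
(The paper presumably reads off the $\alpha$-loss from the norm on Madani's space $Y$, but this direct argument suffices for the statement.)

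\emph{Part \ref{item:sg-lower-b}.} Apply Madani's theorem and its corollary to $T=T(t_0)$. This should give:
\begin{itemize}
\item a Banach space $Y\supseteq X$ containing $X$ as a closed subspace, with $Y$ reflexive (respectively \UMD) whenever $X$ is;
\item a bounded algebra homomorphism $\Phi:\{T\}'\to\bdd(Y)$ with $\Phi(U)|_X=U$;
\item the property that $\Phi(T)$ is invertible on $Y$.
\end{itemize}

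The construction then proceeds as follows.
\begin{enumerate}
\item Every $T(t)$ commutes with $T(t_0)$, so we may set $S(t):=\Phi(T(t))$ for $t\ge0$.
\item Because $\Phi$ is multiplicative and unital, $(S(t))_{t\ge0}$ is a semigroup extending $(T(t))$.
\item The operator $S(t_0)=\Phi(T)$ is invertible. For $t>0$ choose $n$ with $nt_0\ge t$ and define
\[
S(-t):=S(t_0)^{-n}S(nt_0-t).
\]
\item Check that this is independent of $n$, using commutativity and the semigroup law.
\item Check the group law $S(t)S(s)=S(t+s)$ for all $t,s\in\R$.
\end{enumerate}
Strong continuity of the group follows from strong continuity at $0^+$ of the forward semigroup.

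The main obstacle is showing that $t\mapsto\Phi(T(t))$ is strongly continuous on $Y$. The uniform bound $\sup_{0\le t\le1}\norm{\Phi(T(t))}<\infty$ is immediate from the boundedness of $\Phi$. It then suffices to prove $\Phi(T(t))y\to y$ for $y$ in a dense subset of $Y$. For this I would use the explicit form of Madani's space. I expect $Y$ to be a completion or quotient of a weighted sequence space of $X$-valued sequences, with $\Phi(U)$ acting coordinatewise as $U$. On finitely supported sequences, convergence then follows coordinatewise from the strong continuity of $(T(t))$. If the construction instead involves an ultraproduct-type or renorming step, I would first establish the corresponding continuity property for the dense "finitary" part of $Y$ and then pass to the closure using the uniform bound.
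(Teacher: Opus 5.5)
Your proposal is correct. For part~\ref{item:sg-lower-b} you follow the paper's route. You apply Theorem~\ref{thm:Madani} and Corollary~\ref{cor:Madani} to $T(t_0)$, put $S(t)=\Phi(T(t))$, and extend to a group using invertibility of $S(t_0)$; the paper simply cites Hille--Phillips for this last step. The strong continuity you flagged as the main obstacle works exactly as you anticipated: $Y=\ell_p(\NN;X)/\ran(\MG)$ and $\Phi(U)[x]=[\UHUT x]$, so Lemma~\ref{lem:elem} gives it immediately.

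For part~\ref{item:sg-lower-a}, however, your route is genuinely different. The paper obtains the lower bound as a by-product of the dilation. From $\norm{S(t_0)^{-1}}\le\alpha/c$ it gets $\norm{S(-nt_0-\delta)}\le K(\alpha/c)^n$, hence $\norm{S(t)y}\ge m e^{\nu t}\norm{y}$ for all $y\in Y$. It then pulls this back to $X$ through \eqref{eq:norm-inequality}, losing a factor $\alpha$ in both the exponent and the constant. You work directly in $X$. The factorization $T(t_0)=T(t_0-s)T(s)$ together with $K=\sup_{0\le r\le t_0}\norm{T(r)}$ gives the local bound $\norm{T(s)x}\ge (c/K)\norm{x}$ on $[0,t_0]$. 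Iteration then yields the estimate with the sharper exponent $\frac1{t_0}\ln c$, so even $\alpha=1$ is admissible, and no dilation is needed at all. Your step (ii) also shows that the behaviour of $\gamma(t)$ as $t\to0+$, which the paper calls ``less clear'', is harmless for $C_0$-semigroups.

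What the paper's approach buys is the lower bound on the whole space $Y$, i.e.\ exponential control of $\norm{S(-t)}$. That is what is actually used in the proof of Theorem~\ref{thm:HP-transfer} to bound the group type and locate $\sigma(A_Y)$ in a strip. Your argument, run on $Y$ with the lower constant $c/\alpha$ for $S(t_0)$, would recover precisely that bound as well.
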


\noindent %As before, in part~\ref{item:sg-lower-b}, if $X$ is reflexive or \UMD
%then $Y$ is reflexive or \UMD, respectively.
Part~\ref{item:sg-lower-a} seems not to have been
clearly stated in the literature.  Considering, in this situation,
  \[
   \gamma(t):=\big(\inf\{\norm{T(t)x}: x\in X, \norm{x}=1\}\big)^{-1},\quad t>0,
  \]
  one has $\gamma(t+s)\le\gamma(t)\gamma(s)$ for $s,t>0$ by the semigroup property
  which implies exponential boundedness of $\gamma(t)$ for
  $t\to\infty$. However, boundedness of $\gamma(t)$ as $t\to0+$ is
  less clear, as there exist semigroups $(V(t))_{t>0}$ which are
  strongly continuous on $(0,\infty)$ and for which $\norm{V(t)}$
  grows arbitrarily fast as $t\to 0+$, see
  e.g. Example~\ref{example:sg-growth} below.

  \bigskip \noindent If $X$ is a Hilbert space then one can say more than is stated
  above. In \cite[Theorem~7.3]{B-G} the following is shown.
  
\begin{theorem}[Batty-Geyer]\label{thm:BG}
  Let $T(t)$ be a strongly continuous semigroup on a Hilbert space
  $X$ with generator ${-}A$. Then the following are equivalent.
  \begin{enumerate}[label=\alph*)]
  \item\label{item:BG-a} $T$ satisfies lower bounds, i.e.
    $\norm{ T(t) x } \ge c(t) \norm{x}$ for some strictly positive
    function $c$.
  \item\label{item:BG-b} There exists a left inverse semigroup $S$ of
    $T$ on $X$.
  \item\label{item:BG-c} There exists a $C_0$-group extension $(S(t))_{t\in\R}$
    of $(T(t))_{t\ge0}$ on a larger Hilbert space $Y \supseteq X$ that contains
    $X$ as a closed subspace.
  \end{enumerate}
\end{theorem}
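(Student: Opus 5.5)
We prove the cycle \ref{item:BG-c}$\Rightarrow$\ref{item:BG-a}$\Rightarrow$\ref{item:BG-b}$\Rightarrow$\ref{item:BG-c}.

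\emph{Step \ref{item:BG-c}$\Rightarrow$\ref{item:BG-a}.} For $t\ge0$ and $x\in X$ we have $x=S(-t)S(t)x=S(-t)T(t)x$. Hence
\[
\norm{x}\le\norm{S(-t)}\,\norm{T(t)x}.
\]
Since a $C_0$-group is exponentially bounded, $\norm{S(-t)}<\infty$ for every $t$, and we may take $c(t)=\norm{S(-t)}^{-1}>0$.

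\emph{Step \ref{item:BG-a}$\Rightarrow$\ref{item:BG-b}.} Each $T(t)$ is bounded below, so its range $R_t:=T(t)X$ is closed and $T(t)\colon X\to R_t$ is an isomorphism. In a Hilbert space we can define
\[
S(t):=T(t)^{-1}P_t ,
\]
where $P_t$ is the orthogonal projection onto $R_t$. We then get $S(t)T(t)=I$, with
\[
\norm{S(t)}\le c(t)^{-1}.
\]
The difficulty is the semigroup law. Since $R_{t+s}\subseteq R_t$, one checks $S(s)S(t)=S(t+s)$ via the identity $P_tT(t)=T(t)$ together with a compatibility of the projections. This compatibility can fail for naive orthogonal projections. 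The cleaner route is $S(t):=(T(t)^*T(t))^{-1}T(t)^*$, but that is not multiplicative either.

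So I would instead follow Batty–Geyer by renorming. Local boundedness of $c(t)^{-1}$ follows from Proposition~\ref{prop:sg-lower-bound}\ref{item:sg-lower-a}, which gives $\norm{T(t)x}\ge me^{\nu t}\norm{x}$. After rescaling, this yields an equivalent Hilbert norm (via a limit or Banach-limit construction of $\norm{T(t)x}$) in which $T$ becomes an isometric semigroup, up to a scalar exponential. An isometric semigroup on Hilbert space has $T(t)^*T(t)=I$, so $S(t):=T(t)^*$ is a left inverse semigroup, strongly continuous since the adjoint semigroup is (Hilbert spaces are reflexive).

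\emph{Step \ref{item:BG-b}$\Rightarrow$\ref{item:BG-c}.} Given a left inverse semigroup $S$, construct the extension by the Sz.-Nagy/Wold-type method. Use the Hilbert-space version of Madani's construction, i.e.\ Proposition~\ref{prop:sg-lower-bound}\ref{item:sg-lower-b}, checking that the construction preserves Hilbert structure when $X$ is Hilbert; in particular the inductive-limit completion is a Hilbert space. Alternatively, after renorming to isometries, invoke Cooper's theorem: every $C_0$ semigroup of isometries on a Hilbert space extends to a unitary group on a larger Hilbert space.

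The main obstacle is the renorming step in \ref{item:BG-a}$\Rightarrow$\ref{item:BG-b}. The equivalent norm must be Hilbertian and simultaneously make $e^{-\nu t}T(t)$ isometric. My first attempt would be the Banach-limit construction
\[
\langle x,y\rangle_{\mathrm{new}}:=\mathrm{LIM}_{t\to\infty}\, e^{-2\nu t}\langle T(t)x,T(t)y\rangle .
\]
Two-sided bounds for this form require comparable upper and lower exponential growth. When these rates differ, one must instead use a similarity to a quasi-isometric semigroup, and here I would defer to the argument of \cite{B-G}.
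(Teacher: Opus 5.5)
The paper does not prove this theorem; it quotes it from \cite[Theorem~7.3]{B-G}. So your argument has to stand on its own, and it has a genuine gap in (a)$\Rightarrow$(b), which is exactly the implication that fails in \UMD spaces by Example~\ref{example:BG}.

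Your steps (c)$\Rightarrow$(a) and (b)$\Rightarrow$(c) are fine. For the latter, use that (b) gives $\norm{x}\le\norm{S(t)}\norm{T(t)x}$, and take $p=2$ in Madani's construction (admissible with $\alpha=\sqrt2$). Then $Y=\ell_2(\NN;X)/F\cong F^\perp$ is a Hilbert space; it is a quotient, not an inductive limit. Renorming $Y$ orthogonally along $\iota(X)\oplus\iota(X)^\perp$ makes $\iota$ isometric.

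The problem is your renorming in (a)$\Rightarrow$(b), which cannot work. If $V(t)=e^{-\nu t}T(t)$ were isometric in an equivalent norm, then $\sup_{t\ge0}\norm{V(t)}<\infty$ and $\inf_{t\ge0}\inf_{\norm{x}=1}\norm{V(t)x}>0$ would both hold. Two counterexamples, both groups and hence satisfying (a):
\begin{itemize}
\item For $T(t)=\mathrm{diag}(1,e^{t})$ on $\C^2$, the first condition forces $\nu\ge1$ and the second forces $\nu\le0$.
\item For $T(t)=e^{tN}$ with $N\neq0=N^2$, one of the two conditions fails for every $\nu$, because $\norm{e^{\pm tN}}$ grows linearly.
\end{itemize}
Consequently your Banach-limit form is either unbounded or degenerate, and the Cooper alternative for (b)$\Rightarrow$(c) collapses as well. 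You notice this at the end but defer to \cite{B-G}, so the only non-trivial implication is left unproved.

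The missing idea is that you can only hope for \emph{expansiveness}, and that in a Hilbert space expansiveness suffices, via Phillips' extension theorem for dissipative operators.
\begin{enumerate}
\item Take $\norm{V(t)x}\ge m\norm{x}$ from Proposition~\ref{prop:sg-lower-bound}\ref{item:sg-lower-a}. Put $\norm{x}_1^2:=\int_0^\infty e^{-2\mu t}\norm{V(t)x}^2\,dt$, with $\mu>0$ larger than the growth bound of $V$. This is an equivalent Hilbert norm.
\item From $\int_s^\infty e^{-2\mu u}\norm{V(u)x}^2\,du\ge\frac{m^2}{2\mu}e^{-2\mu s}\norm{V(s)x}^2$, the map $s\mapsto\norm{e^{\kappa s}V(s)x}_1^2$ is nondecreasing once $\kappa\ge\mu(m^{-2}-1)$. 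Thus $W(t):=e^{\kappa t}V(t)$ is expansive, and the negative $-B$ of its generator is dissipative in $(X,\norm{\cdot}_1)$.
\item By Phillips' theorem, a densely defined dissipative operator on a Hilbert space has an m-dissipative extension $C\supseteq -B$. This is where the Hilbert structure enters, through the Cayley transform and orthogonal projections.
\item Let $L$ be the contraction semigroup generated by $C$. For $x\in D(B)$ you get $\frac{d}{ds}L(s)W(s)x=L(s)(C+B)W(s)x=0$, hence $L(s)W(s)=I$. Therefore $e^{(\kappa-\nu)t}L(t)$ is a left inverse semigroup of $T$.
\end{enumerate}
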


\noindent In Proposition~\ref{prop:sg-lower-bound} we have seen that a
single lower bound implies \ref{item:BG-a} and \ref{item:BG-c} on
arbitrary Banach spaces. This raises the question whether the
implication \ref{item:BG-a} $\Rightarrow$ \ref{item:BG-b} remains
valid on general Banach spaces, or at least within well-behaved
classes such as \UMD spaces.  This is however not true, as we will show
in Example~\ref{example:BG} below.

The paper is organized as follows: In Section~\ref{sec:madani} we review
Madani's construction of a dilation. In Section~\ref{sec:semigroups} we prove
Proposition~\ref{prop:sg-lower-bound} and give examples, and in
Section~\ref{sec:proof-main} we prove Theorem~\ref{thm:HP-transfer} and
give a more precise result as a corollary to the proof.

\section{Madani's construction of the dilation}\label{sec:madani}

\noindent We review Madani's construction of the dilation from \cite{Madani}.
Since this lies at the heart of all that is to follow we give full details.

\subsection{Preliminaries}
We start with  with some basic facts that are easily verified.

\noindent Let $X$ be a Banach space and $p\in(1,\infty)$. For any
operator $U\in\bdd(X)$ we denote by $\UHUT\in\bdd(\ell_p(\NN; X)$ the
operator given by $\UHUT(x_n)_n:=(Ux_n)_n$. We note that
$\norm{\UHUT}=\norm{U}$.

\noindent If $(T(t))_{t\ge0}$ is a $C_0$-semigroup in $X$ then
$(\THUT(t))_{t\ge0}$ is a $C_0$-semigroup in $\ell_p(\NN; X)$. Strong
continuity follows from strong continuity on finite sequences and
\[
\sup_{t\in[0,1]}\norm{\THUT(t)}=\sup_{t\in[0,1]}\norm{T(t)}<\infty.
\]
Now suppose that $F$ is a closed subspace of $\ell_p(\NN; X)$. We denote
by $Y_F$ the quotient space $\ell_p(\NN; X)/F$ and write $[x]=x+F$ for the
co-class of $x\in\ell_p(\NN; X)$.

\begin{lemma}\label{lem:elem}
  If $F$ is invariant under $\UHUT$, where $U\in\bdd(X)$, then $
  \Phi_F(U)[x]=[\UHUT x]$ defines an operator $\Phi_F(U)\in\bdd(Y_F)$
  and we have $\norm{\Phi_F(U)}\le\norm{U}$.

  If $(T(t))_{t\ge0}$ is a $C_0$-semigroup in $X$ and $F$ is invariant
  under all operators $\THUT(t)$, $t\ge0$, then $\big(\Phi_F(T(t))\big)_{t\ge0}$
  is a $C_0$-semigroup in $Y_F$.
\end{lemma}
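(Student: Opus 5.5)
The plan is to verify the two assertions of Lemma~\ref{lem:elem} directly from the definition of the quotient norm, using only the invariance of $F$.

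\emph{The first assertion.} Since $\UHUT F\subseteq F$, the map $x\mapsto[\UHUT x]$ vanishes on $F$. Indeed, if $x-x'\in F$ then $\UHUT x-\UHUT x'=\UHUT(x-x')\in F$. Hence $\Phi_F(U)$ is well defined on $Y_F$, and it is clearly linear. For the norm bound we take any representative $x+f$ of $[x]$, with $f\in F$. Then $[\UHUT x]=[\UHUT(x+f)]$, so
\[
\norm{\Phi_F(U)[x]}\le\norm{\UHUT(x+f)}\le\norm{U}\,\norm{x+f}.
\]
Taking the infimum over $f\in F$ gives $\norm{\Phi_F(U)[x]}\le\norm{U}\,\norm{[x]}$.

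\emph{The semigroup law.} Set $\Phi_F(U)=$ the induced operator as above. Because the hat map satisfies $\widehat{UV}=\UHUT\,\widehat V$ and $\widehat{I}=I$, the induced operators obey $\Phi_F(UV)=\Phi_F(U)\Phi_F(V)$ whenever $F$ is invariant under both $\UHUT$ and $\widehat V$. Applied to the semigroup, this yields $\Phi_F(T(t+s))=\Phi_F(T(t))\Phi_F(T(s))$ and $\Phi_F(T(0))=I$.

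\emph{Strong continuity.} We use the fact recalled above that $(\THUT(t))_{t\ge0}$ is a $C_0$-semigroup on $\ell_p(\NN;X)$. For any representative $x$ of $[x]$,
\[
\norm{\Phi_F(T(t))[x]-[x]}_{Y_F}=\norm{[\THUT(t)x-x]}\le\norm{\THUT(t)x-x}\to0 \quad (t\to0+).
\]
The inequality holds because the quotient map is contractive.

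There is no real obstacle in this argument. The only point requiring a little care is checking well-definedness via invariance, and noting that strong continuity passes to the quotient through the contractive quotient map.
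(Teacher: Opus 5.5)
Your proof is correct and follows the paper's argument. The paper calls the first assertion standard, and you fill it in with the well-definedness check and the quotient-norm infimum; the paper says the semigroup law is inherited from $(\THUT(t))_{t\ge0}$, which you make explicit via $\widehat{UV}=\UHUT\,\widehat{V}$; and your strong continuity estimate through the contractive quotient map is exactly the paper's displayed one.
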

\begin{proof}
The assertion on $\Phi_F(U)$ is standard.
The semigroup property of $\big(\Phi_F(T(t))\big)$ is inherited from
$(\THUT(t))$ and strong continuity
follows from
\[
\norm{\Phi_F(T(t))[x]-[x]}_{Y_F} =\norm{[\THUT(t)x-x]}_{Y_F}
\le\norm{\THUT(t)x-x}_{\ell_p(\NN ;X)} \to 0\ (t\to0+)
\]
for all $x\in \ell_p(\NN; X)$.
\end{proof}

We shall also need the right shift $R$ on $\ell_p(\NN; X)$ defined by
$R(x_1,x_2,x_3,\ldots)=(0,x_1,x_2,x_3,\ldots)$. We note that $R$ is an
isometry onto its range and that $R$ commutes with every operator
$\UHUT$, $U\in\bdd(X)$.

\subsection{Madani's dilation}
We assume that $T\in\bdd(X)$ satisfies the lower bound
\begin{equation}    \label{eq:lower-bound}
    \norm{ Tx } \ge c \, \norm{x}, \qquad x \in X,
\end{equation}
where $c>0$. Madani constructed a dilation of $T$ to an isomorphism $S$
on a Banach space $Y$ that contains (an isomorphic copy of) $X$ as a
closed subspace (\cite{Madani}). Moreover, $Y$ is reflexive if $X$ is
reflexive, which was the main motivation for the construction.

We denote by $j:X\to \ell_p(\NN;X)$ the injection $j(x)=(x,0,0,\ldots)$.
The idea is to construct a suitable $\MG\in\bdd(\ell^p(\NN;X))$ with
closed range and commuting with $\THUT$ such that, for $F=\ran(\MG)$ and
the quotient space $Y := Y_F=\ell_p(\NN; X)/\ran(\MG)$ with natural quotient
map $q$ and $S:=\Phi_F(T)$, we have 
  \begin{equation}
    \label{eq:big-diagram}
    \begin{tikzcd}[column sep=huge]
      \ell_p(\NN; X) \arrow[r, "\THUT"] \arrow[d, "q"]
             & \ell_p(\NN; X)    \arrow[d, "q"] \\ %\arrow[dd, bend right=-40, "P"]
      Y  \arrow[r, "S"]
             & Y   \\
      X  \arrow[r, "T"] \arrow[u, "\iota"] \arrow[uu,bend left=40, "j"]
             & X \arrow[u, "\iota"]
   \end{tikzcd}
  \end{equation}
with $\iota := q \circ j:X\to Y$ being an isomorphism onto its range and $S$
being an isomorphism of $Y$.
This was achieved by Madani \cite{Madani} elaborating on some ideas from \cite{B-G}. 

\begin{theorem}[Madani~\cite{Madani}]\label{thm:Madani}
  Fix $\alpha>1$ and $p\in(1,\infty)$ such that $\alpha\ge 2^{1-1/p}$.
  Let $T\in\bdd(X)$ satisfy \eqref{eq:lower-bound} where $c>0$.
  Let $\MG:=I-\frac{\alpha}c\THUT R$. 

  \begin{enumerate}
  \item Then $F:=\ran(\MG)$ is closed and for $Y:=Y_F$ with quotient
    map $q$ the map $\iota:=q\circ j: X \to Y$ is an isomorphism onto
    its range and we have
    \begin{align*}
       \norm{\iota x}_Y \le \norm{x}_X \le \alpha\norm{\iota x}_Y,\quad x\in X.
    \end{align*}
  \item The operator $S:=\Phi_F(T):Y\to Y$ is an isomorphism with
    $\norm{S}\le\norm{T}$ and $\norm{S^{-1}}\le \frac\alpha{c}$ and
    the following diagram commutes.
  \[
    \begin{tikzcd}[column sep=large]
      Y \arrow[r,"S"] & Y  \\
      X \arrow[u,"\iota"] \arrow[r,"T"] & X \arrow[u,"\iota"]
    \end{tikzcd}
  \]
\item If $X$ is reflexive then $Y$ is reflexive.
\end{enumerate}
\end{theorem}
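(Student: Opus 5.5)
The plan is to analyse $\MG=I-\beta\THUT R$ with $\beta:=\alpha/c$ directly on $\ell_p(\NN;X)$. The operator $\THUT R$ maps $(x_n)$ to $(0,Tx_1,Tx_2,\dots)$, so $\norm{\THUT R y}\ge c\norm{y}$ and therefore $\norm{\beta\THUT Ry}\ge\alpha\norm{y}$. Writing $\MG=-\beta\THUT R\,(I-\beta^{-1}(\THUT R)^{-1})$ on the range is awkward, so I would use the lower bound instead. It gives
\[
\norm{\MG y}\ge\beta\norm{\THUT Ry}-\norm{y}\ge(\alpha-1)\norm{y}.
\]
Hence $\MG$ is bounded below and $F=\ran(\MG)$ is closed. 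Because $R$ and $\THUT$ commute and $\MG$ commutes with $\THUT$, $F$ is $\THUT$-invariant, and Lemma~\ref{lem:elem} shows that $S=\Phi_F(T)$ is well defined with $\norm{S}\le\norm{T}$. The diagram commutes because $\THUT j=jT$.

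For (i), the estimate $\norm{\iota x}_Y\le\norm{jx}=\norm{x}$ is trivial. The reverse inequality $\norm{x}\le\alpha\,\mathrm{dist}(jx,F)$ is the main obstacle, and this is where $\alpha\ge2^{1-1/p}$ enters. For $y=(y_n)$ the first coordinate of $jx-\MG y$ is $x-y_1$, and the second is $\beta Ty_1-y_2$. The first coordinate alone only gives $\norm{x-y_1}$, so I would combine the first two coordinates. Since $\norm{\beta Ty_1}\ge\alpha\norm{y_1}$,
\[
\norm{jx-\MG y}^p\ge\norm{x-y_1}^p+\bigl(\alpha\norm{y_1}-\norm{y_2}\bigr)_+^p .
\]
This is not enough when $y_2$ cancels the second term, so the cascade $y_{n+1}\approx\beta Ty_n$ has to be controlled over all coordinates. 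I would argue that making every coordinate small forces $\norm{y_n}\gtrsim\alpha^{n}\norm{y_1}$, which contradicts $y\in\ell_p$ unless $y_1$ is small. Quantitatively, I expect the minimization of $a^p+(\alpha b-\dots)^p$ to reduce to the scalar inequality $|x|\le\alpha\,(|x-s|^p+|s|^p)^{1/p}$ for scalars. The minimum of the right-hand side is attained at $s=x/2$, where it equals $2^{1/p-1}|x|$. So the condition $\alpha\,2^{1/p-1}\ge1$ is exactly what is needed, and I would carry out this telescoping estimate coordinatewise.

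For (ii), injectivity and surjectivity of $S$ modulo $F$ come from the identity $[y]=[\beta\THUT Ry]$, which holds because $y-\beta\THUT Ry=\MG y\in F$. This identity gives $[y]=S[\beta Ry]$, so $S$ is onto and $S^{-1}[y]=[\beta Ry]$. Then $\norm{S^{-1}}\le\beta=\alpha/c$, since $R$ is an isometry. One still has to check that $S^{-1}$ is well defined, i.e.\ that $R$ maps $F$ into $F$. This holds because $R$ commutes with $\MG$.

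For (iii), $\ell_p(\NN;X)$ is reflexive when $X$ is reflexive and $1<p<\infty$, and quotients of reflexive spaces by closed subspaces are reflexive.
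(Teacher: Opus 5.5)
Your closedness argument for $F$, part (ii) and part (iii) are correct and match the paper's Steps 1 and 3. In (ii), $R$ leaves $F$ invariant, so $[y]\mapsto[\beta Ry]$ is well defined. It is a two-sided inverse of $S$ because $R$ commutes with $\THUT$.

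The gap is in part (i), in the step you yourself call the main obstacle: the inequality $\norm{x}\le\alpha\,\mathrm{dist}(jx,F)$ is never proved. You correctly isolate the scalar inequality $a+b\le\alpha(a^p+b^p)^{1/p}$ for $a,b\ge0$, which is the paper's \eqref{eq:alpha-cond}. You also see that the two-coordinate bound is a dead end. After that, however, you offer only the heuristic that small errors force $\norm{y_n}\gtrsim\alpha^n\norm{y_1}$, plus an unspecified ``telescoping''. That heuristic does not give an estimate valid for all $y$ with the sharp constant $\alpha$, because the error coordinates of $jx-\MG y$ need not be small.

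The missing idea is to carry the remainder forward using the lower bound at every step:
\begin{itemize}
\item First apply $\norm{u}^p\le(\norm{u-v}+\norm{v})^p\le\alpha^p\norm{u-v}^p+\alpha^p\norm{v}^p$ with $u=x$, $v=y_1$.
\item Then use $\alpha\norm{y_1}\le\beta\norm{Ty_1}$, which follows from \eqref{eq:lower-bound}, to replace the remainder $\alpha^p\norm{y_1}^p$ by $\norm{\beta Ty_1}^p$.
\item Apply the same inequality again with $u=\beta Ty_1$, $v=y_2$, and continue in this way.
\end{itemize}
By induction,
\[
\norm{x}^p\le\alpha^p\Bigl(\norm{x-y_1}^p+\sum_{k=1}^{n-1}\norm{\beta Ty_k-y_{k+1}}^p\Bigr)+\alpha^p\norm{y_n}^p\qquad(n\in\NN),
\]
and the bracket is at most $\norm{jx-\MG y}^p$. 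You do not need a contradiction with $y\in\ell_p$. You only need $\norm{y_n}\to0$, which kills the remainder and gives $\norm{x}\le\alpha\norm{jx-\MG y}$ for every $y$. Taking the infimum over $y$ gives $\norm{x}\le\alpha\norm{\iota x}_Y$. This is the paper's Step 2, and with it inserted your proof is complete.
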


\begin{remark}\label{rem:Madani}\rm
  $(a)$ For fixed $\alpha>1$ we can choose $p\in(1,\infty)$ such that
  $2^{1-1/p}\le\alpha$. Hence $\norm{S^{-1}}$ can be made arbitrarily
  close to $\frac1c$.

  $(b)$ The construction respects other geometric properties of the
  Banach space $X$. Below we shall use that $Y$ is \UMD if $X$ is
  \UMD,  see,  e.g. \cite[Proposition~4.2.17]{fab4:band1}.
  But also a lot of other properties persist such as, e.g., uniform
  convexity or $B$-convexity or Pisier's property $(\alpha)$.  We do
  not elaborate on this further.
\end{remark}

\begin{proof}[Proof of Theorem~\ref{thm:Madani}] 
  \noindent{\bf Step 1: The construction of $F$.}  Let
  $\MG = I - \tfrac{\alpha}{c} \THUT R$. By the assumption on $\alpha$ and
  $p$, we have
  \begin{equation}    \label{eq:alpha-cond}
    (|a|+|b|)^p \le |\alpha a|^p + |\alpha b|^p\,\qquad a,b\in\C.
  \end{equation}
  By the triangle inequality and \eqref{eq:lower-bound} we infer
 \[
 \norm{ \MG x }_{\ell_p(\NN; X)} \ge (\alpha-1) \norm{x},
 \qquad x\in\ell_p(\NN; X),
 \]  
 and so $F = \ran(\MG)$ is closed by $\alpha>1$.

 \medskip\noindent{\bf Step 2: Lower bounds for the injection
   $\iota$.}  Recall we defined $Y = \ell_p(\NN; X) / F$ with quotient
 map $q$ as well as $\iota = q \circ j$.  This implies
 immediately $\norm{ \iota } \le 1$. On the other hand,
 \eqref{eq:alpha-cond} ensures
 \[
   \norm{ y}_X^p
   \le (\norm{ y- z}_X + \norm{z}_X)^p
   \le \norm{ \alpha(y-z)}_X^p + \norm{ \alpha z }_X^p
 \]
 Applying this inequality as well as \eqref{eq:lower-bound} 
 iteratively to a sequence $(z_k) \in \ell_p(\NN; X)$, we obtain
 \begin{align*}
   \norm{x}_X^p
   \le & \; \norm{ \alpha(x-z_1) }_X^p + \norm{ \alpha z_1 }_X^p \\
   \le & \; \norm{ \alpha(x-z_1) }_X^p + \norm{ \tfrac{\alpha}{c} T z_1 }_X^p \\
   \le & \; \norm{ \alpha(x-z_1) }_X^p
   + \norm{ \alpha ( \tfrac{\alpha}{c} T z_1 -z_2) }_X^p
   + \norm{ \alpha z_2}_X^p \\
   \le & \; \norm{ \alpha(x-z_1) }_X^p +
   \norm{ \alpha (\tfrac{\alpha}{c} T z_1 -z_2) }_X^p
   + \norm{ \tfrac{\alpha}{c} T z_2}_X^p \\
   \le & \; \norm{ \alpha(x-z_1) }_X^p
   + \norm{ \alpha (\tfrac{\alpha}{c} T z_1 -z_2) }_X^p
   + \norm{ \alpha (\tfrac{\alpha}{c} T z_2 - z_3)}_X^p + \norm{ \alpha z_3}
 \end{align*}
 etc. We infer that
 \[
   \norm{x}_X^p
   \le  \alpha^p \norm{ (x-z_1) }_X^p
   + \alpha^p \sum_{k=1}^\infty  \norm{\tfrac{\alpha}{c} T z_k -z_{k+1} }_X^p
   = \alpha^p \bignorm{ j(x) - (I - \tfrac{\alpha}{c} \THUT R) z }_{\ell_p(\NN; X)}.
 \]
 This implies $\norm{x}_X \le \alpha \norm{ \iota(x) }_Y$, so that
 $\iota: X \to Y$ is injective and has closed range. 

\medskip\noindent{\bf Step 3: Construction and invertibility of $S$.}
Since $\THUT$ and $\MG$ commute, $F=\ran(\MG)$ is invariant under $\THUT$,
so that
\[
  S=\Phi_F(T):Y\to Y, \qquad S([z])=[ \THUT z].
\]
is well-defined, cf. Lemma~\ref{lem:elem}.
For every $z \in \ell_p(\NN;X)$ we have
\[
  \MG z = z - \tfrac{\alpha}{c} \THUT R z \in \ran(\MG)=F.
\]
Hence in the quotient space $Y$,
\begin{equation}  \label{eq:inverse-prep}
  [z]=[\tfrac{\alpha}{c} \THUT R z].
\end{equation}
Now let $L [y] := \tfrac{\alpha}{c}\,  [R z]$ for any $z \in [y]$.
This is well-defined, since $R\, \MG = \MG R$ implies that
$F=\ran(\MG)$ is invariant under $R$.
%Indeed, let $z - z' \in J = \ran(\MG)$. Then $z = z' + \MG(w)$ so that
%$R z = R z' + \MG( R w)$, whence $[R z] = [R z']$.
Now we calculate
\[
S L [z] = \tfrac{\alpha}{c} \, S [ R z] = \tfrac{\alpha}{c}\,
[ \THUT  R z ] = \tfrac{\alpha}{c}\, [  \THUT R z]
  \overset{~\eqref{eq:inverse-prep}}{=\joinrel=} [z]
\]
so that $L$ is a right inverse of $S$. In the same way,
\[
L S[z] = L [ \THUT z ] = \tfrac{\alpha}{c}\, [\THUT R z]
\overset{~\eqref{eq:inverse-prep}}{=\joinrel=} [z].
\]
We observe that $\norm{ S } \le \norm{T}$ and that
$\norm{S^{-1}} = \norm{L} \le \tfrac{\alpha}{c}$.
\end{proof}

\begin{remark}
  Theorem~\ref{thm:Madani} extends to settings where
  $T_1 \in \cL(X, Y)$ admits lower bounds: let $X, Y$ be 
  Banach spaces, and $\norm{T_1 x}_Y \ge c \, \norm{x}_X$.  Then $T_1$
  admits a dilation to an invertible operator $S$. The idea is to
  consider the  space
  $Z := \ell_{p}(\ZZ_{\le 0}; X) \oplus \ell_p(\ZZ_{>0}; Y)$ and the
  bounded operator $T$ given by $T (z_n)_n = (w_{n})_n$ where for all
  $n\not=0$, $w_n = z_{n+1}$ and $w_0 = \tfrac1c T_1 z_1$. %\marginpar{$T$ STIMMT SO NiCHT!}
%   This
%   represents as double-infinite matrix
% \[
% \begin{array}{c|ccccccc}
%  & \cdots &  \scalebox{0.6}{$j={-}2$} & \scalebox{0.6}{$j={-}1$} & \scalebox{0.6}{$j=0$} & \scalebox{0.6}{$j=1$} & \cdots \\
% \hline
% \vdots                  & \ddots   & \ddots     &          &         &        &        \\
% \scalebox{0.6}{$j=-1$}  & \cell{}  & \cell{I_X} & \cell{0}    & \cell{}  & \cell{} & \cell{} \\
% \scalebox{0.6}{$j=0$}   & \cell{}  & \cell{}    & \cell{I_X} & \cell{0}  & \cell{} & \cell{} \\
% \scalebox{0.6}{$j=1$}   & \cell{}  & \cell{}    & \cell{}   & \cell{\tfrac1c T} & \cell{0} & \cell{} \\
% \scalebox{0.6}{$j=2$}   & \cell{}  & \cell{}    & \cell{}    & \cell{} & \cell{I_Y} & \ddots \\
% \scalebox{0.6}{$j=3$}   & \cell{}  & \cell{}    & \cell{}    & \cell{}  & \cell{} & \ddots \\
% \vdots                  &          &            &            &          &          & 
% \end{array}
% \]
% where we printed zeros on the main diagonal to suggest the shift
% structure.
  Now, $\norm{ T z }_Z \ge \norm{z}_Z$ and we can apply
  Theorem~\ref{thm:Madani}. As in the main result, if $X, Y$ are
  reflexive, so is $Z$, and the same is true for the \UMD property.
\end{remark}

\begin{remark}
  Let $P: \ell_p(\NN; X) \to X$ be the projection onto the first component, 
  so that $P\circ j = I_X$.  We observe that, in the above
  construction, no bounded projection $\pi: Y \to X$ with the property
  $P = \pi \circ q$ can exist (unless $X = \{0\}$, of
  course). Indeed, observe that $I_X = P \circ \MG \circ j$.
  Hence, assuming $P = \pi \circ q$ leads by $ q \circ \MG = 0$ to
  the contradiction $I_X = \pi \circ q \circ \MG \circ j = 0$.
\end{remark}

\noindent The following corollary sharpens some of  Madani's findings slightly.

\begin{corollary}[Madani~\cite{Madani}] \label{cor:Madani}
  Under the assumptions of Theorem~\ref{thm:Madani}, the map
  $ \Phi:=\Phi_F: \{ T \}' \to  \bdd(Y)$, $U\mapsto\Phi(U):=\Phi_F(U)$ is an
  algebra homomorphism satisfying
  \[
    \frac1\alpha \norm{U} \le \norm{ \Phi(U) } \le \norm{U},\qquad U\in\{T\}',
  \]
  for which the diagram
  \[
    \begin{tikzcd}[column sep=large]
      Y \arrow[r,"\Phi(U)"] & Y  \\
      X \arrow[u,"\iota"] \arrow[r,"U"] & X \arrow[u,"\iota"]
    \end{tikzcd}
  \]
  commutes,  i.e.  $\Phi(U)\iota = \iota U$. More precisely, for all $U\in\{T\}'$ and
  $x\in X$, we have
\begin{equation}  \label{eq:norm-inequality}
  \frac{1}{\alpha}\norm{ Ux }_X \le \norm{ \Phi(U)\iota x }_Y    \le \norm{ Ux }_X.
\end{equation}
\end{corollary}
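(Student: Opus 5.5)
The plan is to check the three assertions of Corollary~\ref{cor:Madani} in order: the homomorphism property, the intertwining relation $\Phi(U)\iota=\iota U$ together with \eqref{eq:norm-inequality}, and finally the operator norm bounds, which follow from \eqref{eq:norm-inequality}.

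\textbf{Step 1: $\Phi$ is a well-defined algebra homomorphism.} Let $U\in\{T\}'$. Then $\UHUT$ commutes with $\THUT$. It also commutes with $R$, as noted after Lemma~\ref{lem:elem}. Hence $\UHUT$ commutes with $\MG=I-\frac{\alpha}{c}\THUT R$, so $F=\ran(\MG)$ is invariant under $\UHUT$. By Lemma~\ref{lem:elem}, $\Phi(U)=\Phi_F(U)\in\bdd(Y)$ is well defined and satisfies $\norm{\Phi(U)}\le\norm{U}$. Linearity and multiplicativity follow from $\widehat{UV}=\UHUT\widehat{V}$ and $\widehat{I}=I$: for instance, $\Phi(UV)[x]=[\UHUT\widehat V x]=\Phi(U)\Phi(V)[x]$.

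\textbf{Step 2: intertwining and the two-sided estimate.} Since $\UHUT j=jU$, we get
\[
\Phi(U)\iota x=[\UHUT jx]=[jUx]=\iota Ux .
\]
Part (i) of Theorem~\ref{thm:Madani} gives $\norm{\iota y}_Y\le\norm{y}_X\le\alpha\norm{\iota y}_Y$. Applying this with $y=Ux$ yields \eqref{eq:norm-inequality} directly.

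\textbf{Step 3: operator norm bounds.} The upper bound $\norm{\Phi(U)}\le\norm{U}$ was already obtained in Step~1. For the lower bound, take $x\in X$ with $\norm{x}\le1$. Then $\norm{\iota x}_Y\le1$, and by \eqref{eq:norm-inequality}
\[
\norm{\Phi(U)}\ge\norm{\Phi(U)\iota x}_Y\ge\tfrac1\alpha\norm{Ux}_X .
\]
Taking the supremum over such $x$ gives $\norm{\Phi(U)}\ge\frac1\alpha\norm{U}$.

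The only point needing care is Step~1. There one must check that $F$ is invariant under $\UHUT$ merely from $U$ commuting with $T$. This depends on $\MG$ being built only from $\THUT$ and $R$, both of which commute with $\UHUT$. Everything else is bookkeeping with the estimate from Theorem~\ref{thm:Madani}(i).
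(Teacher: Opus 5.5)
Your proposal is correct and follows the paper's proof essentially step by step. Invariance of $F$ comes from $\UHUT$ commuting with $\THUT$ and $R$, multiplicativity from $\widehat{UV}=\UHUT\widehat{V}$, the intertwining from $\UHUT j=jU$, and the norm bounds from Theorem~\ref{thm:Madani}(i) and Lemma~\ref{lem:elem}; your Step~3 only spells out the supremum argument that the paper compresses into ``This implies''.
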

\begin{proof}
  Let $U \in \{T\}'$, so that $U$ and $T$ commute. Then also $\UHUT$ and $\THUT$
  commute, and hence $\UHUT$ and $\MG$ commute, which implies that $\UHUT$
  leaves $F=\ran(\MG)$ invariant. Thus $\Phi(U):=\Phi_F(U)$ is well-defined,
  cp. Lemma~\ref{lem:elem}.
%  Therefore $\UHUT$ induces a bounded operator on the quotient
%\[
%  \Phi(U):Y\to Y, \qquad \Phi(U)[z]=[\UHUT z].
%\]

  Clearly, $\Phi(I_X) = I_Y$, and by definition $\Phi$ is linear.
Now let $U, V \in \{T\}'$. Then
\[
  \Phi(UV)[z] = [\widehat{UV} z],
\qquad
  \Phi(U)\Phi(V)[z] = [\widehat{U}\,\widehat{V} z].
\]
Since $\widehat{UV} = \widehat{U}\,\widehat{V}$ on $\ell_p(\NN;X)$,
the multiplicativity follows. Finally, observe
\[
  \Phi(U)\iota x = \Phi(U) [ j(x) ] = [ \UHUT j(x) ] = [ j(Ux) ] = \iota(Ux)
\]
Using the norm equivalence on $\iota(X)$ from Theorem~\ref{thm:Madani} and
the previous identity we obtain, for all $x\in X$,
\[
\frac{1}{\alpha}\norm{ Ux }_X \le \norm{ \Phi(U)\iota x }_Y
= \norm{ \iota(Ux) }_Y   \le \norm{ Ux }_X.
\]
This implies
$\frac1\alpha \norm{ U }_{\bdd(X)} \le \norm{ \Phi(U) }_{\bdd(Y)}$ on the one
hand, while
$\norm{ \Phi(U) [z] } = \norm{ [ \UHUT z ] } \le \norm{U}_{\bdd(X)}
\norm{ [z] }$ implies $\norm{ \Phi(U) }_{\bdd(Y)} \le \norm{ U }_{\bdd(X)}$.
\end{proof}

\section{Results and examples concerning semigroups}\label{sec:semigroups}

\begin{proof}[Proof of Proposition~\ref{prop:sg-lower-bound}]
Fixing $\alpha>1$ we choose $p\in(1,\infty)$ such that
$\alpha\ge 2^{1-1/p}$.  Applying Theorem~\ref{thm:Madani} and
Corollary~\ref{cor:Madani} to $T=T(t_0)$ we obtain $Y$, $\iota:X\to Y$
and $\Phi:\{T(t_0)\}'\to\bdd(Y)$.

\noindent We define $S(t):=\Phi(T(t))$ for $t\ge0$, which is a
$C_0$-semigroup in $Y$ by Lemma~\ref{lem:elem}, and have that
$S(-t_0):Y\to Y$ is bijective with
$\norm{S(t_0)^{-1}} \le \frac{\alpha}{c}$. But %it is well known that
a $C_0$-semigroup extends to a $C_0$-group if one of the semigroup
operators is invertible, see
e.g. \cite[Theorem~16.3.6]{Hille-Philips}. Hence we obtain a
$C_0$-group $(S(t))_{t\in\R}$ in $Y$ extending the given semigroup.

\noindent We now examine the growth bound of $(S(-t))_{t\ge0}$.  We have,
for $0 \le \delta \le t_0$,
\[
  \norm{ S(-nt_0 - \delta) } \le K e^{\ln(\frac{\alpha}{c}) n}
\]
where $K = \max\{ \norm{S(-\delta)}: 0 \le \delta \le t_0 \}$. A standard
calculation thus yields
\[
m \, e^{\nu t} \, \norm{x} \le \norm{ S(t) x } %\le M \,e^{\omega t} \, \norm{x},
\quad x\in X, t>0,
\]
for $\nu = \tfrac1{t_0}\ln(\frac{c}{\alpha})$ and some $m>0$. Now by
\eqref{eq:norm-inequality} we obtain, for any $x\in X$,
\[
  \norm{ T(t) x }_X  \ge \norm{ \Phi( T(t)) \iota x }_Y = \norm{ S(t) \iota x } \ge   m \, e^{\nu t} \, \norm{\iota x}  \ge \frac{m}{\alpha} \, e^{\nu t} \, \norm{x}. \qedhere
\]
\end{proof}

\begin{example}\label{example:sg-growth}
  The following example shows that strong continuity of a semigroup on
  $(0, \infty)$ does not imply growth bounds as $t\to 0+$. In particular,
  a function $\gamma:(0,\infty)\to[0,\infty)$ satisfying
  $\gamma(t+s)\le\gamma(t)\gamma(s)$ for all $s,t>0$ can grow arbitrarily
  fast for $t\to0+$.
  %and so does not guarantee strong continuity at zero.
  
  \noindent Consider $X = L^p(\RR_+) \times L^p(\RR_+)$ and let
  \[
    A \left(\begin{array}{c} f_1 \\ f_2 \end{array} \right)
    = 
    \left(\begin{array}{cc} \phi(x) & -e^x  \\ 0 & \phi(x) \end{array} \right)\,
    \left(\begin{array}{c} f_1 \\ f_2 \end{array} \right)
  \]
  where $\phi: \RR_+ \to \RR_+$ is a function of class $C^1$. Let us
  assume additionally $\phi(0)=\phi'(0)=0$ and that
  $\phi': \RR_+ \to \RR_+$ is strictly increasing with $\phi'(x)\to\infty$ for
  $x\to\infty$ and $\phi'(\log x)\le x$ for $x>1$.  A direct calculation
  shows
  \[
    \exp(tA) =  e^{-t\, \phi(x)}   \left(\begin{array}{rr} 1 &  t e^x  \\ 0 & 1 \end{array} \right),
  \]
  and it follows that
  $\norm{ \exp(tA) } \simeq \sup_{x>0} \; e^{-t\, \phi(x)} \max\{1, t e^{x} \}$.
  Now  $\sup_{x>0} e^{-t\, \phi(x)} = 1$ and for $t\in(0,1)$ we thus have
  \[
  \sup_{x>0} \; e^{-t\, \phi(x)} \max\{1, t e^{x} \}
  =\max\{1, \sup\{ t e^{x-t\phi(x)}:\,x\ge\log(1/t)\}\} 
  \]
  and
  \[
  \sup_{x\ge\log(1/t)}  \; t \exp(x -t\, \phi(x) )
  = t \exp\Big( t \sup_{x\ge\log(1/t)} \,\big(\frac{x}t - \phi(x) \big) \Big)
  \]
  Let $\phi^*(s) := \sup_{x>0} \; (s x - \phi(x))$ denote the convex
  conjugate, or Young dual function of $\phi$ and observe that, for $s>1$ and
  under our assumptions, the $\sup$ is attained at $x_0$ with
  $\phi'(x_0)=s\ge\phi'(\log s)$ which means $x_0\ge\log s$. Hence 
\[
  \sup_{x\ge\log(1/t)} \; \left(\frac{x}{t} - \phi(x)\right) =
  \phi^*\!\left(\frac{1}{t}\right),\qquad t\in(0,1).
\]
As a consequence, the operator norm of the semigroup can grow
arbitrarily fast as $t \to 0+$.  For example, if
$\phi(x) \sim x \log^{\circ k}(x)$, then
$\phi^*(s) \sim \exp^{\circ (k+1)}(s)$ as $s \to \infty$.
\end{example}

\bigskip

\begin{example}\label{example:BG}
  The following example shows there exist semigroups on non-Hilbertian
  Banach spaces that do allow a lower estimate, but no left inverse
  semigroup.
  
  Recall that every infinite dimensional, non-Hilbert Banach space
  contains a closed non-complemented subspace, see \cite{LT}.  So let
  $Z$ a \UMD Banach space, and $W \subseteq Z$ a non-complemented,
  closed subspace. We let $1<p<\infty$,
  \[
    X = L^p(\RR_-; Z) \oplus L^p(\RR_+; W),
  \]
  and $(T(t))_{t\ge0}$ the left shift semigroup on $X$. It is isometric on $X$
  since we shift from $W$ into $Z$ and both carry the same norm.
  Fix $t_0>0$. We let\\[1ex]
  \[
    \begin{array}{llll}
    j_0: & \; \left\{
           \begin{array}{lcl}
             W & \to & X\\
             w & \mapsto & t_0^{-\sfrac1p} \dsp\eins_{(0, t_0)} \otimes w
           \end{array}\right., &
    j_1: & \; \left\{
           \begin{array}{lcl}
             Z & \to & X\\
             z & \mapsto & t_0^{-\sfrac1p} \dsp\eins_{(-t_0, 0)} \otimes z
           \end{array}\right.,\\[5ex] 
    M_0: & \; \left\{
           \begin{array}{lcl}
             X & \to & W\\
             f & \mapsto & t_0^{\sfrac1p-1} \dsp\int_0^{t_0} f(t)\,dt
           \end{array}\right.. &
%    M_{-1}: & \; \left\{
%           \begin{array}{lcl}
%             X & \to & Z\\
%             f & \mapsto & t_0^{\sfrac1p-1} \dsp\int_{-t_0}^0 f(t)\,dt.
%           \end{array}\right.\\
    \end{array}
  \]
  Observe that $M_0j_0=I_W$ and that $j_1I_W = T(t_0) j_0$. %We will use this twice below.
  Now assume towards a contradiction that $S(t_0)\in \bdd(X)$ is a left inverse
  to $T(t_0)$. Then
  \[
    P := M_0 S(t_0) j_1\in\bdd(Z)
  \]
  satisfies
  \[
    P^2 = (M_0 S(t_0) j_1)^2
        =  M_0 S(t_0) \; T(t_0) j_0  \; M_0 S(t_0) j_1
        =  M_0  j_0 M_0 S(t_0) j_1
        = P
  \]
  so that $P$ is a projection satisfying
  $\ran(P) \subseteq \ran M_0 \subseteq W$. Finally,  for $w \in W$,
  \[
    P w = M_0 S(t_0) \;j_1 \; w = M_0 S(t_0) \; T(t_0) j_0 \; w = M_0 j_0 w = w,
  \]
  which contradicts $W$ being not complemented.  This example shows
  that the obstruction to a generalization of Theorem~\ref{thm:BG} lies in
  Banach space geometry.
  %and does not rely, for example, on any specific PDE structure.
\end{example}

\section{Proof of the main result}\label{sec:proof-main}

As mentioned in the introduction, the argument relies on Madani's
dilation that we reviewed in Section~\ref{sec:madani} and the
functional calculus for groups obtained by Haase on strips
using transference methods. For a nontrivial complex domain $\calO$ we
denote by $H^\infty_1(\calO)$ the space of functions $f\in H^\infty(\calO)$
for which $z\mapsto zf'(z)$ is bounded on $\calO$, and we define
$\calE(\calO):=\{f\in H^\infty_1(\calO): \sup_{z\in\calO}(1+|z|)^2|f(z)|<\infty\}$.
For $\al\in\R$ and $\beta>0$
we define open half-planes $\HP_\al$ and and open strips $\strip_\beta$ by
\[
\HP_\al = \{ z\in\C: \; \Re(z)> \al \},\qquad
\strip_\beta = \{ z\in\C: |\Re(z)|<\beta \}.
\]
The result we shall use states that, for a group $(U(t))_{t\in\R}$ in a \UMD
space with $\norm{ U(t) }\le M e^{\om|t|}$, $t\in\R$, the generator has a
bounded $H^\infty_1(\strip_\eta)$-calculus for any $\eta>\om$. The precise
construction of the functional calculus is reviewed in the proof.

\begin{proof}[Proof of Theorem~\ref{thm:HP-transfer} ]
  Replacing $A$ by $A+\omega+\del$ with $\del>0$, if necessary, we may
  suppose $\om < 0$ for the growth bound, i.e. exponential stability
  of the semigroup, and have to show that $A$ has a bounded
  $H^\infty(\Sect_\si)$-calculus for all $\si>\pihalbe$.

  \medskip \noindent Part~\ref{item:sg-lower-b} of
  Proposition~\ref{prop:sg-lower-bound} and subsequent remarks ensure
  that the space $Y$ we obtained by application of
  Theorem~\ref{thm:Madani} and Corollary~\ref{cor:Madani} to
  $T:=T(t_0)$ is again a \UMD space and that the $C_0$-semigroup
  $\big(\Phi(T(t))\big)_{t\ge0}$ extends to a $C_0$-group
  $(S(t))_{t\in\R}$ in $Y$.  These operators satisfy
  \begin{equation}\label{eq:intertwining-semigroup}
    S(t) \iota = \iota T(t) \qquad (t\ge 0).
  \end{equation}
  Let ${-}A$ denote the generator of $(T(t))$ and ${-}A_Y$ the
  generator of $(S(t))$ (and not $i\, A_Y$ which is customary if one only
  deals with groups).  From Proposition~\ref{prop:sg-lower-bound} we
  know that
  \[
    m \, e^{\nu t }\, \norm{y} \le \norm{ S(t) y } \le M\, e^{\om t}\, \norm{y} \qquad (y \in Y, t\ge 0)
  \]
  for some $\nu < \om < 0$, and some $m, M>0$. In particular, the
  group type of $S$ (see \cite[p.302]{Haase:buch}) is $\le{-}\nu$ and the
  spectrum of $A_Y$ lies in the strip
  \[
    \strip_{-\nu} = \{ z: \quad |\Re(z)| \le -\nu \}.
  \]
  Fix $\eta> -\nu$. Let us write
  $\calE( \strip_\eta) := \{ f \in H^\infty_1(\strip_\eta): \;
  |f(z)| = O((1+|z|)^{-2}) \}$.
  %% Self-note: our group generator is "strip type" but not "strong
  %% strip type": the resolvent is bounded in right half planes, but
  %% has, a priori, no uniform decay (espescially if Im(z) \to \pm
  %% \infty). Thus "-2" as exponent.
  Then by Haase's generalisation
  \cite[Theorem~3.6]{Haase:transference} of the Hieber--Pr\"uss
  theorem \cite{Hieber-Pruess}, $A_Y$ admits a bounded
  $H_1^\infty(\strip_\eta)$-functional calculus, that is, there
  exists some $C>0$ such that, for   $f \in \calE( \strip_\eta )$ and
  $\sigma\in(-\nu,\eta)$,
  the absolutely convergent integral
  \begin{equation}    \label{eq:elem-fc-for-AY}
    f(A_Y) = \frac{1}{2\pi i} \int_{\partial \strip_\sigma} f(z) R(z, A_Y)\,dz
  \end{equation}
  satisfies
  $\norm{ f(A_Y) } \le C \norm{f}_{H_1^\infty(\strip_\eta)}$, see
  \cite[Cor. 3.4]{Haase:transference}.  Now consider
  $f \in \calE( \HP_{-\eta})$. %where $\HP_r$ denotes the halfplane.
  By holomorphy, the right hand boundary of the curve integral in
  \eqref{eq:elem-fc-for-AY} can be shifted to the right, i.e. from
  $\Re(z) = \sigma$ to $\Re(z) = r$, for any $r> \eta$.  Letting
  $r\to {+}\infty$, uniform boundedness of the resolvents and the
  decay of $f$ make this curve integral vanish, so that
   \begin{equation}    \label{eq:halpfplane-fc-for-AY}
    f(A_Y) = \frac{-1}{2\pi i} \int_{-\sigma+i\RR} f(z) R(z, A_Y)\,dz
  \end{equation} 
  with absolutely converging integrals, which gives us a functional calculus
  on $\calE(\HP_{-\eta})$ for $A_Y$ with the estimate
  $\norm{ f(A_Y) } \le C \norm{f}_{H_1^\infty(\HP_{-\eta})}$.
  The absolutely converging integral
  \begin{equation}    \label{eq:halpfplane-fc-for-A}
    f(A) := \frac{1}{2\pi i} \int_{\Re(z) = -\eta} f(z)\, R(z, A)\,dz
  \end{equation}  
  defines a bounded operator $f(A)\in\bdd(X)$ for all
  $f \in \calE( \HP_{-\eta})$.
  By the properties obtained in Corollary~\ref{cor:Madani}, we may apply
  Laplace transform to \eqref{eq:intertwining-semigroup} and obtain
  \[
     R(\lambda, A_Y) = \Phi( R(\lambda, A) ) \qquad (\Re(\lambda) < -\omega).
  \]
  Furthermore this entails $f(A_Y) = \Phi( f(A) )$.
  Using Corollary~\ref{cor:Madani} again we conclude
  \[
  \norm{ f(A) }_{\bdd(x)}\le\al\norm{ \Phi( f(A) ) }_{\bdd(Y)}
  = \al\norm{ f(A_Y) }_{\bdd(Y)}
  \le \al C \norm{ f }_{H^\infty_1(\calE(\HP_{-\eta}))}.    
  \]
  We also have
  \[
    \Phi( (fg)(A) ) = (fg)(A_Y) = f(A_Y) g(A_Y) = \Phi( f(A) ) \Phi( g(A) ) = \Phi( f(A) g(A) ),
  \]
  so that the assignement $f \mapsto f(A)$ is multiplicative. Let
  \[
     \varrho(z) = \frac{z}{(1+\eta'+z)^2}\qquad \varrho_n(z) = \varrho(nz) - \varrho(\frac{z}n).
  \]
  We set $f_n(z) := \varrho_n(z)^2 f(z)$.  Then
  $f_n \in \calE( \HP_{-\eta} )$, $f_n \to f$ pointwise, boundedly, and 
  \[
    \norm{ f_n(A) } \le C \norm{ f_n }_{H^\infty_1(\HP_{-\eta})} \le C' \norm{ f }_{H^\infty_1(\HP_{-\eta})}.
  \]
  By the convergence lemma, see e.g. \cite[Chapter~5]{Haase:buch},
  $f(A)$ is bounded on $X$ and $f_n(A) \to f(A)$ strongly, i.e.  $A$
  has a $H^\infty_1(\HP_{-\eta})$ functional calculus.
  Shifting both the operator and the function class,
  we obtain that $A+\eta$ has a bounded $H^\infty_1(\HP_0)$ functional
  calculus and it is well known, see
  e.g. \cite[Lemma~4.5]{Haase:transference}, that $A+\eta$ then has a
  bounded $H^\infty(\Sect_\si)$ functional calculus for all
  $\si>\pihalbe$.

  \medskip\noindent Since $A$ is sectorial of angle $\pihalbe$ and
  $0 \in \varrho(A)$, \cite[Prop.~6.10]{KKW} shows that $A$ has a
  bounded $H^\infty(\Sect_\si)$ functional calculus for all $\si>\pihalbe$.
\end{proof}

An inspection of the proof shows that a little more can be said.  For
$a, r \in \RR$ and $ \pihalbe < \si < \pi$ we consider the following
sets
\begin{equation}   \label{eq:K-shape}
  \KR_{\si, a, r} := ( a + \Sect_\si)  \, \cup \; \{ z: \; \Re(z)> r \}
   = ( a + \Sect_\si )\cup\HP_r.
\end{equation}
For such sets the following folklore lemma becomes relevant. 

\begin{lemma}\label{lem:folklore} 
  Let $\eta >0$. Then for any $\eps > 0$, $a\in\R$, and $\si>\pihalbe$
  there exists $C_{\eps, a, \si}>0$ such that
  \[
    \norm{f}_{H_1^\infty( \HP_{-\eta} )}   \le C_{\eps, \si}\, \norm{ f }_{ H^\infty( \KR_{\si, a, -\eta-\eps}  ) }
  \]
\end{lemma}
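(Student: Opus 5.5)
The estimate has two parts. The norm on $H^\infty_1(\HP_{-\eta})$ is $\norm{f}_{H^\infty_1(\HP_{-\eta})}=\sup_{\HP_{-\eta}}|f|+\sup_{z\in\HP_{-\eta}}|zf'(z)|$. The first summand is trivially bounded by $\norm{f}_{H^\infty(\KR_{\si,a,-\eta-\eps})}$, since $\HP_{-\eta}\subseteq\HP_{-\eta-\eps}\subseteq\KR_{\si,a,-\eta-\eps}$. So everything reduces to controlling $|zf'(z)|$ on $\HP_{-\eta}$. For this we use Cauchy's estimate $|f'(z)|\le \rho(z)^{-1}\norm{f}_\infty$, where $\rho(z)=\mathrm{dist}(z,\partial\KR_{\si,a,-\eta-\eps})$. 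The task is then to show $\sup_{z\in\HP_{-\eta}}|z|/\rho(z)<\infty$.

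We split $\HP_{-\eta}$ into a bounded part and a part near infinity. For $|z|\le R$ (with $R$ chosen below) we have $\rho(z)\ge \eps$, because the disc $B(z,\eps)$ lies in $\HP_{-\eta-\eps}$. Hence $|zf'(z)|\le (R/\eps)\norm{f}_\infty$ there.

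For $|z|$ large we use the sector $a+\Sect_\si$ with $\si>\pihalbe$. Fix $\si'\in(\pihalbe,\si)$. If $z\in\HP_{-\eta}$ and $|z|\ge R$ with $R$ large (depending on $a,\eta,\si,\si'$), then $z\in a+\overline{\Sect_{\si'}}$. Indeed, $\Re(z-a)>-\eta-a$ while $|z-a|\to\infty$, so $\arg(z-a)$ is eventually at most $\pihalbe+o(1)<\si'$. For points $w$ of $a+\Sect_{\si'}$ the distance to the boundary of $a+\Sect_\si$ is at least $\sin(\si-\si')\,|w-a|$. This gives $\rho(z)\ge \sin(\si-\si')|z-a|\ge \tfrac12\sin(\si-\si')|z|$ for $|z|\ge R\ge 2|a|$. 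Consequently $|zf'(z)|\le 2\sin(\si-\si')^{-1}\norm{f}_\infty$ on this region.

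Combining the two regions gives the lemma with $C_{\eps,a,\si}=1+\max\{R/\eps,\,2/\sin(\si-\si')\}$. The only step needing care is the elementary geometric claim that the far part of $\HP_{-\eta}$ sits inside a strictly smaller sector $a+\Sect_{\si'}$. It follows from $\sup\{\arg(z-a):\Re z>-\eta,\,|z|\ge R\}\to\pihalbe$ as $R\to\infty$, together with the fact that the distance to the boundary of a sector grows linearly in $|w-a|$ on strictly smaller subsectors. This step is where the hypothesis $\si>\pihalbe$ is essential. The constant depends on $a$ as well as on $\eps$ and $\si$, as the first line of the statement indicates.
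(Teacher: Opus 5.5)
Your proof is correct and essentially the same as the paper's. Both split $\HP_{-\eta}$ into a bounded zone, where a Cauchy estimate on a disc of radius comparable to $\eps$ (the paper uses $\eps/2$) bounds $|zf'(z)|$ by a multiple of $\eps^{-1}$ times the sup norm, and a far zone lying in a subsector $a+\overline{\Sect_{\si'}}$, where a disc of radius proportional to $|z-a|$ (the paper's $\delta|z-a|$ with $\delta<\sin(\si-\si')$) gives a uniform bound. Your version also handles the $\sup|f|$ part of the norm and states the dependence on $a$ explicitly, which the paper leaves implicit.
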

\begin{proof}
  \begin{figure}
    \centering
  \begin{tikzpicture}[scale=1]
% Parameters
\def\A{(1,0)}
\def\angle{110}

% --- Shaded region ---
% 0) K region.
\draw[white, fill=gray!10]
({1+6*cos(\angle+10)}, {-6*sin(\angle+10) }) --
(6, {-6*sin(\angle+10)} ) --
(6, {6*sin(\angle+10) }) --
({1+6*cos(\angle+10)}, {6*sin(\angle+10) }) --
(-1, {-2 *tan(\angle+10)} )  --
(-1, {2 *tan(\angle+10)} )  --
cycle  ;
\draw (5, 4) node {$\KR_{\si, a, -\eta-\eps}$};

% 1) Disk centered at 
\begin{scope}
\clip (-0.5, -6) rectangle (6, 6); % Define a clipping box to the right of x = -1
\fill[gray!30] \A circle ({3/2*tan(\angle)-0.3});
\end{scope}

% --- Point A ---
\fill \A circle (2pt) ;

% ---  Point z
\fill (-0.4,5) circle (1pt) node[right] {$z$};
\draw[->] (-0.4+0.7,5) arc(0:360:0.7) ; 

% ---  other Point z
\fill (-0.4,-1) circle (1pt) node[right] {$z$};
\draw[->] (-0.4, -1) ++(60:0.3) arc(60:60+359.9:0.3);

% --- \sigma' Rays from A at \angle ---
\draw[dotted] \A -- ++({\angle}:5);
\draw[dotted] \A -- ++({-\angle}:5);
\draw (1.7,0) arc (0:\angle:0.7) ;
\draw (1.3, 0.3) node {$\si'$};
% --- \sigma Rays from A at \angle+10 ---
\draw \A -- ++({\angle+10}:6);
\draw \A -- ++({-\angle-10}:6);
\draw (1.7,0) arc (0:{-\angle-10}:0.7) ;
\draw (1.2, -0.3) node {$\si$};

% --- Vertical line x = -eta ---
\draw[dashed] (-0.5,-5) -- (-0.5,6) ;
\draw (-0.3, -0.2) node {${-}\eta$};

% --- Vertical line x = -eta-eps ---
\draw (-1,-5) -- (-1,6) ;
% --- K
\draw (2, 2) node {$K$};
% Axes
\draw[thick, ->] (-1.5,0) -- (7,0) node[below, xshift=-3pt] {$x$};
\draw[thick, ->] (0,-5) -- (0,6.3) node[below, xshift=4pt] {$y$};
\end{tikzpicture}

\caption{The region $\KR_{\si, a, -\eta-\eps}$, two cases for a point
  $z$ in the half-plane $\HP_{-\eta}$ and respective integration
  paths.}
    \label{fig:folklore}
  \end{figure}
  We use Cauchy's formula and combine the estimates from two
  repesentations.  We limit our attention to the case $a > -\eta$, since
  otherwise $\KR_{\si, a, -\eta} = a + \Sect_\si$.  

  \medskip\noindent For $z\in \HP_{-\eta}$ we distinguish 2 cases: we
  fix $\si'\in(\pihalbe,\si)$ and define the bounded zone $K$ by
  $\Re(z) > {-}\eta$ and
  \[
    |z-a|\le \max\big\{2a, \frac{\eta+a}{|\cos(\si')|}\big\}.
  \]
  For $z\in K$ we use 
\[
  z\,f'(z)=\frac1{2\pi i}\int\limits_{|\zeta-z|=\eps/2}\frac{z
    \, f(\zeta)}{(\zeta-z)^2}\,d\zeta,
\]
leading to
\[% \begin{align*}
|z \, f'(z)| \le  \; \frac1{2\pi}\,\pi\eps \frac{4|z|}{\eps^2}\,
\norm{ f }_{ H^\infty( \KR_{\si, a, -\eta-\eps}  ) }
\le  \; \frac{4M}\eps\,\norm{ f }_{ H^\infty( \KR_{\si, a, -\eta-\eps}  ) }.
\]% \end{align*}

For $z$ in the complement $\HP_{-\eta} \setminus K$ we use the fact that
$|\arg(z)| \le \sigma' < \sigma$, which allows a suitable choice of
$\del \in (0,  \sin(\sigma-\sigma') )$ to write
\[
z\, f'(z)=\frac1{2\pi i}\int\limits_{|\zeta-z|=\del|z-a|}
\frac{z f(\zeta)}{(\zeta-z)^2}\,d\zeta.
\]
Using $|z-a|\ge 2a$ we have
\[
  \frac12|z|  \le |z-a|\le \frac32 |z|,
\]
which leads to
\[% \begin{align*}
|z f'(z)| \le \frac{2\pi\del|z-a||z|}{2\pi\del^2|z-a|^2}\,
\norm{ f }_{ H^\infty( \KR_{\si, a, -\eta-\eps}  ) } 
\le \frac{6}{\del}\,\norm{ f }_{ H^\infty( \KR_{\si, a, -\eta-\eps}  ) }.\qedhere
\]% \end{align*}
\end{proof}

\noindent The following is a corollary to the proof of Theorem~\ref{thm:HP-transfer}.

\begin{corollary}\label{cor:K-fc}
  Under the assumptions of Theorem~\ref{thm:HP-transfer} the operator $A$
  has a bounded $H^\infty(\KR_{\si,a,-\theta})$-calculus for any $\theta>\om$,
  $\si>\pihalbe$, and $a\in\R$.
\end{corollary}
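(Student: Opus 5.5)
Fix $\theta>\om$, $\si>\pihalbe$ and $a\in\R$. The plan is to rerun the proof of Theorem~\ref{thm:HP-transfer} without the final sectorial reduction, and then to use Lemma~\ref{lem:folklore} to pass from $H^\infty(\KR_{\si,a,-\theta})$ to $H^\infty_1$ of a half-plane.

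\textbf{Step 1: a half-plane calculus for $A$ at the right level.} The exponent $\nu$ from Proposition~\ref{prop:sg-lower-bound} is not needed here. What matters is only the bound on the right half of the spectrum. Since $\norm{S(t)}\le Me^{\om t}$ for $t\ge0$, the spectrum of $A_Y$ lies in $\{\Re z\ge -\om\}$. We choose $\eta$ with $\om<\eta<\theta$. Then any $f\in\calE(\HP_{-\eta})$ is holomorphic on a half-plane containing $\si(A_Y)$ together with the vertical line $\Re z=-\eta'$ for some $\eta'\in(\om,\eta)$. This is where the half-plane representation \eqref{eq:halpfplane-fc-for-AY} is used.

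To obtain the estimate $\norm{f(A_Y)}\le C\norm{f}_{H^\infty_1(\HP_{-\eta})}$, I would argue as in the proof of Theorem~\ref{thm:HP-transfer}. After shifting by $\om+\del$ the semigroup is exponentially stable. Haase's strip calculus, with the strip chosen wide enough, then gives the half-plane estimate, and the right-hand contour can be removed exactly as there. Transferring back via $\Phi$ and Corollary~\ref{cor:Madani}, followed by the convergence lemma, yields that $A$ has a bounded $H^\infty_1(\HP_{-\eta})$-calculus for every $\eta>\om$. The only subtlety is checking that the shift done at the start of that proof leaves no trace on the level $\eta$, beyond requiring $\eta>\om$. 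I expect this bookkeeping to be the main obstacle.

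\textbf{Step 2: apply the folklore lemma.} Set $\eps:=\theta-\eta>0$, so that $\KR_{\si,a,-\eta-\eps}=\KR_{\si,a,-\theta}$. Lemma~\ref{lem:folklore} gives
\[
\norm{f}_{H^\infty_1(\HP_{-\eta})}\le C\norm{f}_{H^\infty(\KR_{\si,a,-\theta})}
\]
for all $f\in H^\infty(\KR_{\si,a,-\theta})$. This requires $\HP_{-\eta}\subseteq\KR_{\si,a,-\theta}$, which holds, so $f|_{\HP_{-\eta}}$ makes sense. Combining this with Step 1 gives $\norm{f(A)}\le C'\norm{f}_{H^\infty(\KR_{\si,a,-\theta})}$.

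\textbf{Step 3: consistency of the two calculi.} It remains to check that $f(A)$ is the natural calculus on $\KR_{\si,a,-\theta}$. The spectrum of $A$ lies in $\overline{\HP_{-\om}}$, which is contained in $\KR$. For elementary $f$ decaying on $\KR$, the Cauchy integral over $\partial\KR'$ (with $\KR'$ a slightly smaller set of the same shape) can be deformed to the line $\Re z=-\eta$. Cauchy's theorem makes this legitimate, because the region between the two contours lies in the resolvent set, where the resolvent is bounded uniformly on $\Re z\le-\eta$ by the Hille--Yosida estimate. Hence both definitions agree.

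The extension to all of $H^\infty(\KR_{\si,a,-\theta})$ then follows from the convergence lemma, using the regularizers $\varrho_n^2$ as in the proof of Theorem~\ref{thm:HP-transfer}.
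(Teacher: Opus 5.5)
\textbf{There is a genuine gap in Step~1.} Your overall architecture is sound: a half-plane calculus at a level $\eta\in(\om,\theta)$, then Lemma~\ref{lem:folklore} with $\eps=\theta-\eta$, then consistency. But the argument you indicate does not deliver that level, and this is exactly the point you deferred as ``bookkeeping''.

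In the proof of Theorem~\ref{thm:HP-transfer}, Haase's theorem is applied to a strip symmetric about $0$, whose half-width must exceed the two-sided type of the group. After the normalization $A\mapsto A+\om+\del$, the forward type of $(S(t))$ is $-\del$, but its backward type is $\om+\del-\nu>0$. So the strip, and with it the half-plane, comes out as $\HP_{-\eta'}$ with $\eta'>\om+\del-\nu$ for the normalized operator; this is the paper's $\eta>-\nu$. Undoing the shift, you get $\HP_{-\eta}$ with $\eta>2\om-\nu+2\del$ for $A$ itself. Since in general $\nu<\om$, this level stays bounded away from $\om$. For $\theta\in(\om,2\om-\nu]$ your Step~2 therefore has no admissible $\eta$. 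Your remark that $\nu$ is not needed is precisely where the argument breaks: with a strip centred at the origin, the backward growth rate $-\nu$ dictates where the left edge lies.

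\textbf{How the paper handles it.} In the normalized setting $\nu<\om<0$, the paper keeps the far-left level $\eta>-\nu$. It uses Lemma~\ref{lem:folklore} and the freedom in $a$ to get a bounded $H^\infty(\KR_{\si,a,0})$-calculus for $A+\eta$. It then passes from $A+\eta$ back to $A$ by the perturbation idea of \cite[Prop.~6.10]{KKW}. By the resolvent identity, $f(A)-f(A+\eta)$ is a multiple of $\eta\int_\Ga f(\la)R(\la,A+\eta)R(\la,A)\,d\la$ with $\Ga=\partial\KR_{\si',a,-\theta'}$. This is bounded by $C\norm{f}_\infty$, because the product of resolvents is $O(|\la|^{-2})$ on the sector rays of $\Ga$.

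\textbf{How to repair your route.} Re-centre the strip instead of normalizing $\om$. With $\mu:=\frac{\om+\nu}2$ one has $\norm{e^{-\mu t}S(t)}\le M'e^{\frac{\om-\nu}2|t|}$ for all $t\in\R$. Haase's theorem applied to $A_Y+\mu$ then gives a bounded $H^\infty_1$-calculus for $A_Y$ on the strip $\{-\eta<\Re z<\eta-\om-\nu\}$ for every $\eta>\om$. Removing the right edge and transferring via $\Phi$ exactly as in the paper gives Step~1 for all $\eta>\om$.

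With this fix your proof is genuinely different from the paper's and more economical, since no perturbation argument is needed. Moreover, the case $a=-\theta$ of the corollary is Theorem~\ref{thm:HP-transfer}, so the appeal to \cite{KKW} in that proof would also become unnecessary.

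\textbf{A minor point in Step~3.} A merely bounded resolvent does not kill the arcs at infinity for $f=O(|z|^{-\del})$ with $\del\le1$. Either use $\norm{R(z,A)}\le M/(-\om-\Re z)$, or first treat quadratically decaying $f$ and then apply the convergence lemma.
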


Here, for $f\in H^\infty(\KR_{\si,a,-\theta})$ satisfying in addition
$|f(z)|=O((1+|z|)^{-\del})$ for some $\del>0$, the operator $A$ is defined by
the absolutely convergent integral
\[
 f(A)=\frac1{2\pi i}\int_{\partial\KR_{\si',a,-\theta'}} f(\la )R(\la,A)\,d\la,
\]
where $\si'\in(\pihalbe,\si)$ and $\theta'\in(\om,\theta)$. This defines a
functional calculus $f\mapsto f(A)$, and boundedness of an
$H^\infty(\KR_{\si,a,-\theta})$-calculus means that there exists $C>0$ with
$\norm{f(A)}_{\bdd(X)}\le C\norm{f}_{H^\infty(\KR_{\si,a,-\theta})}$ for all such $f$.
Again, we then have a unique extension to a bounded algebra homomorphism
$H^\infty(\KR_{\si,a,-\theta}) \to \bdd(X)$ via the convergence lemma.

\begin{proof}
Again, we resort to the case $\om<0$ and have to show the assertion for
$\theta=0$.
Since we saw in the proof that $A$ has a bounded
$H^\infty_1(\HP_{-\eta})$ functional calculus for all $\eta>-\nu$,
Lemma~\ref{lem:folklore} implies that it also has a bounded
$H^\infty( \KR_{\si, a, -\eta} )$ functional calculus for 
all $\eta>-\nu$, all $\si>\pihalbe$ and all $a \in \RR$.
The freedom of $a \in \RR$ allows
shifting, so that $A+\eta$ has a bounded $H^\infty( \KR_{\si, a, 0} )$
functional calculus for all $a\in \RR$ and all $\si>\pihalbe$. Now we
can mimic the main idea of the proof of \cite[Prop.~6.10]{KKW}. Let
$\Gamma$ be the boundary curve of $\KR_{\si', a, -\theta'}$ where
$\si'\in(\pihalbe,\si)$ and $\theta'\in(\om,0)$. Then for bounded
holomorphic functions $f\in\KR_{\si,a,0}$ with
$|f(z)|=O((1+|z|)^{-\del}$ for $|z|\to\infty$ and some $\del>0$,
%say, quadratic decay when $|z| \to \infty$,
we have three convergent integrals,
\begin{align*}
 \tfrac{1}{2\pi i} \int_\Ga f(\la)  R(\la, A)\,d\la
 = & \;   \tfrac{1}{2\pi i} \int_\Ga f(\la) R(\la, A_\eta) \,d\la +    \tfrac{\eta}{2\pi i} \int_\Ga  f(\la) R(\la, A+\eta) R(\la, A)) \,d\la\\
 = & \; f(A_\eta) +  \tfrac{\eta}{2\pi i} \int_\Ga  f(\la) R(\la, A+\eta) R(\la, A)) \,d\la.
\end{align*}
but the right hand side allows an upper estimate against
$\norm{f}_{\infty}$. The standard approximation argument then gives a
bounded $H^\infty( \KR_{\si, a, 0})$-functional calculus for $A$.
\end{proof}

\end{document}